\DeclareMathOperator{\ex}{\mathsf{E}}
\DeclareMathOperator{\prob}{\mathsf{P}}
\newcommand*{\abs}[1]{\left\lvert#1\right\rvert}
\newcommand*{\set}[1]{\left\{#1\right\}}
\newcommand{\FF}{\mathcal F}
\newcommand{\Real}{\mathbb R}
\newcommand{\ind}{\mathds 1}
\newtheorem{theorem}{Theorem}[section]
\newtheorem{proposition}[theorem]{Proposition}
\newtheorem{corollary}[theorem]{Corollary}
\theoremstyle{definition}
\theoremstyle{remark}
\newtheorem{remark}[theorem]{Remark}
\begin{document}

\title{Fractional diffusion Bessel processes with~Hurst~index~$H\in(0,\frac12)$}

\author{Yuliya Mishura{$^{1,2}$}}
\email{yuliyamishura@knu.ua}
\address{$^1$ Department of Probability Theory, Statistics and Actuarial Mathematics, Taras Shevchenko National University of Kyiv, 64/13, Volodymyrska Street, 01601 Kyiv, Ukraine}
\address{$^2$ Division of Mathematics and Physics, M\"alardalen University, 721 23 V\"aster{\aa}s, Sweden}
\author{Kostiantyn Ralchenko$^{1}$}
\email{kostiantynralchenko@knu.ua}

\begin{abstract}
We introduce fractional diffusion Bessel process with Hurst index $H\in(0,\frac12)$, derive a stochastic differential equation for it, and study the asymptotic properties of its sample paths.
\end{abstract}

\keywords{Fractional Bessel process, fractional Brownian motion, stochastic differential equation, reflection function, asymptotic properties}

\subjclass{60G22, 60G17, 60H10}

\maketitle

\section{Introduction}

The study of diffusion Bessel processes began intensively with the paper \cite{McKean1} and was continued in many works, see, for example, \cite{PitmanYor81}, \cite[Ch.~11]{RevuzYor} and \cite{Yor97}.
For a positive integer $\nu$, a $\nu$-dimensional Bessel process $\rho$ can be defined as
\begin{equation}\label{eq:radialdef}
\rho_t = \sqrt{(B_t^1)^2+\dots+(B_t^\nu)^2},\quad t\ge0,
\end{equation}
where $B=(B^1,\dots,B^\nu)$ is a $\nu$-dimensional Brownian motion.
Note that $Z_t =\rho^2_t$ can be represented as a strong solution of the following stochastic differential equation:
\begin{equation}\label{eq:besq}
dZ_t = 2 \sqrt{\abs{Z_t}} dW_t + \nu dt,
\end{equation}
where $W$ is a one-dimensional Wiener process, that is related to $\nu$-dimen\-sio\-nal process $B$ by
$W_t = \sum_{i=1}^\nu \int_0^t \frac{B^i_s}{\rho_s}\,dB^i_s$.
This representation makes it possible to define Bessel process $\rho$ for every $\nu\ge0$ as a square root of $Z$, a unique solution to \eqref{eq:besq}, see, e.g., \cite[Ch.~11, Definition 1.9]{RevuzYor}.
Then for $\nu\ge2$, $\rho$ satisfies the equation
\begin{equation}\label{eq:difbes}
d\rho_t =  \frac{\nu-1}{2}\rho_t^{-1}\,dt+dW_t.
\end{equation}
For $\nu<2$ the situation is more involved due to the presence of local time, see e.g. \cite{bert, MPYT2023}.

There are several possibilities how to generalize the concept of Bessel processes to the fractional case.
For example, in \cite{ShevchenkoZatula} fractionally integrated Bessel process was studied. It is defined as
\[
\Upsilon_s=\int_0^t\left (u^{H-1/2} - (u - s)_+^{H-1/2}\right )d\rho_u,
\]
where $\rho$ is a Bessel process.
Fractional {R}iesz--{B}essel motion and related processes were studied in \cite{anh2001models}.
In the papers \cite{GuerraNualart,HuNualart2005}  (see also \cite{gaosome})
fractional Bessel process $\rho$ is defined by the formula \eqref{eq:radialdef}, where $\nu$-dimensional Brownian motion is replaced by $(B^{1,H},\dots,B^{\nu,H})$, a $\nu$-dimensional fractional Brownian motion with the Hurst index $H$.
For $\nu\ge2$ and $H>\frac12$ fractional Bessel process $\rho$ satisfies the equation
\begin{equation}\label{eq-nualart}
\rho_t = \rho_0 + H(\nu-1)\int_0^t\frac{s^{2H-1}}{\rho_s}\,ds+\sum_{i=1}^\nu \int_0^t \frac{B^{i,H}_s}{\rho_s}\,dB^{i,H}_s,
\end{equation}
where the stochastic integrals are interpreted in divergence form. The case $H<\frac12$ was studied in \cite{Essaky}.
Later this approach was extended to the cases of Bessel processes driven by bifractional \cite{SunGuoLi} and subfractional \cite{Shen-subfrac} Brownian motions.

Generally speaking, the process $\sum_{i=1}^\nu \int_0^t \frac{B^{i,H}_s}{\rho_s}\,dB^{i,H}_s$ from \eqref{eq-nualart} is not a fractional Brownian motion (except the case $H=\frac12$, where we get  Brownian motions).
This means that if we replace $W$ by a fractional Brownian motion in \eqref{eq:difbes}, we get another generalization of Bessel process to the fractional case. This generalization is the subject of the present paper.

\enlargethispage{4pt}More precisely, let $(\Omega,\FF,\prob)$ be a probability space and $B^H = \{B^H_t,t\ge\nolinebreak0\}$ be a fractional Brownian motion (fBm) with Hurst index $H\in(0,1)$, that is a Gaussian process with $\ex B^H_t =0$, $t\ge0$, and covariance function
$\ex B^H_tB^H_s = \frac12(t^{2H} + s^{2H} - \abs{t-s}^{2H})$,
$t,s \ge 0$.
The paper is devoted to the trajectories' behaviour of the process $X^H=\set{X^H_t, t\ge0}$ that satisfies the stochastic differential equation that is more convenient to write in the integral form:
\begin{equation}\label{eq:sde}
X_t^H = X_0 + a\int_0^t\frac{ds}{X_s^H} + \sigma B^H_t+L^H_t,\;t\ge 0,
\end{equation}
where $X_0$, $a$, $\sigma$ are strictly positive constants and the process $L^H$ is nondecreasing and appears for $H<1/2$. By analogy with diffusion Bessel processes, we name this process \emph{fractional diffusion Bessel process}. Let us denote $X_0$ the innitial value of all the equations under consideration.

Equation \eqref{eq:sde} is a particular case of equation
\begin{equation}\label{eq:sde-cir}
Y_t^H = X_0 + a\int_0^t\frac{ds}{Y_s^H}  - b\int_0^t Y_s^H ds + \sigma B^H_t+L^H_t,\;t\ge 0,
\end{equation}
where $b\ge0$.
Equation \eqref{eq:sde-cir} was investigated in detail in \cite{MSTA1} for $H>\frac12$ (in this case $L^H=0$). In \cite{MSTA2} an approach how to get this equation was established for $H<\frac12$, but equation itself was not considered.  See also \cite{MYT23}, where the case of small parameter $a>0$ was studied for $H>1/2$.
In particular, it was proved in \cite{MSTA1} that in the case $H>\frac12$ equation \eqref{eq:sde-cir} with $L^H=0$ has a unique solution that is strictly positive.
Moreover, according to comparison theorem for the equations involving fBm, that was proved in \cite[Lemma 1]{MSTA1}
the solution of equation \eqref{eq:sde} in the case $H>\frac12$ and with $L^H=0$ exists and is strictly positive.
The positivity of solution of \eqref{eq:sde} for $H>\frac12$ and with $L^H=0$ follows also from \cite{HuNualartSong} or \cite{Kubilius2020,KubMed}, where more general equations were studied. Moreover, \cite{Kubilius2020} investigates positive approximations for solutions and the parameter estimation problem for $H$.

However, for $H<\frac12$ the situation is much more involved.
We apply the ideas from \cite{MSTA2} in order to construct fractional diffusion Bessel process
by replacing the drift $\frac{a}{X_s}$ in \eqref{eq:sde} by
$\frac{a}{X_s \ind_{X_s>0} + \varepsilon}$; and by considering the pointwise limit of solutions as $\varepsilon\downarrow0$.
Then we derive a stochastic differential equation for this limit process and investigate its asymptotic behaviour for large time $t$. It turns out that the process does not hit zero starting with some random time, moreover, its asymptotic growth is comparable to a power function. We also study the influence of the parameter $a$ on the properties of the process. In particular, we show that its trajectory never hits zero for sufficiently large values of $a$. So,  in some sense, we finalize the preliminary information  about the fractional diffusion Bessel processes with $H<1/2$ obtained  in   \cite{MSTA2}.

The paper is organized as follows.
In Section \ref{sec:prelim} we define fractional diffusion Bessel process $X^H$ as a pointwise limit of solutions to stochastic differential equations, approximating the equation \eqref{eq:sde}. We also give some preliminary properties of $X^H$ related to its positivity and continuity.
In Section~\ref{sec:equation} we show that $X^H$ satisfies a stochastic differential equation of the form \eqref{eq:sde}, where the right-hand side contains  a reflection function $L^H$. We investigate the properties of this function and the asymptotic behaviour of $X^H$ as $t\to\infty$.
Section \ref{sec:funofdrift} is devoted to the asymptotic properties of a fractional diffusion Bessel process as a function of drift coefficient $a$; we study the behaviour of $X^H$ as $a\to\infty$ and  as $a\to0$.
Section~\ref{sec:numerics} contains some numerical illustrations for theoretical results established in Sections \ref{sec:equation}--\ref{sec:funofdrift}.

\section{Auxiliary properties of fractional Brownian motion and fractional diffusion Bessel process}
\label{sec:prelim}
The following statement from \cite{KMM2015} (see also \cite[Sec.~B.3.5]{KMR2017}) gives the a.s.\ upper bounds for fBm and its increments.
\begin{proposition} \label{prop:KMM-bounds}
\begin{enumerate}[(i)]
\item For any $p \ge 1$ the asymptotic growth of fBm is characterized as follows:
\[
\sup_{0< s \le t}
\abs{B^H_s} \le ((t^H \abs{\log t}^p) \vee 1) c(p),
\quad t > 0,
\]
where $c(p)=c(p,\omega)$ is a random variable having moments of any order.
\item For any $p \ge 1$ and $0 < \delta < H$ the asymptotic growth of the increments of
fBm is characterized as follows:
\[
\lvert B^H_s - B^H_t\rvert \le \lvert t - s\rvert^{H-\delta} ((t^\delta \abs{\log t}^p) \vee 1)c(p),
\quad t>s>0,
\]
where $c(p)=c(p,\omega)$ is a random variable having moments of any order.
\end{enumerate}
\end{proposition}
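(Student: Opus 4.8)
The plan is to
prove Proposition~\ref{prop:KMM-bounds} by reducing both almost-sure bounds to
sharp tail estimates on the suprema of the relevant Gaussian processes, taken over
a dyadic partition of the time scale, and then summing a convergent series via the
Borel--Cantelli lemma. First I would recall that the key analytic input is the
Gaussian concentration (Borell--TIS) inequality together with the exact scaling of
fBm. For part~(i), I would freeze the time scale to dyadic blocks $t\in(2^{n-1},2^n]$
and use self-similarity $B^H_{2^n u}\stackrel{d}{=}2^{nH}B^H_u$ to reduce the
supremum over each block to a normalized supremum of $B^H$ over a fixed compact
interval such as $[0,1]$ or $[\tfrac12,1]$. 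The normalized supremum is an
almost-surely finite random variable whose tail decays Gaussianly, so the
probability that it exceeds a multiple of $|\log 2^n|^p=(n\log 2)^p$ is summable in
$n$. Borel--Cantelli then gives, for $n$ large enough, a bound of the form
$\sup_{s\le 2^n}|B^H_s|\le 2^{nH}(n\log 2)^p c(p)$, which after replacing $2^n$ by
$t$ yields the stated $(t^H|\log t|^p)\vee 1$ behaviour; the maximum with $1$ and
the restriction to $t>0$ absorb the small-$t$ regime where $|\log t|$ blows up but
$t^H\to0$.

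For part~(ii), the increment bound, I would work with the same dyadic scheme but
now estimate the modulus of continuity of $B^H$ on each block. On $(2^{n-1},2^n]$ I
would use the standard chaining or Garsia--Rodemich--Rumsey approach to control
$|B^H_t-B^H_s|/|t-s|^{H-\delta}$, exploiting that $B^H$ has a version that is
$(H-\delta)$-Hölder on compacts and that rescaling by $2^{nH}$ converts an increment
over a block of length $\sim 2^n$ into a normalized increment over a unit interval.
The loss of $\delta$ in the Hölder exponent is exactly what makes the relevant
Gaussian supremum (of the normalized Hölder seminorm) have finite Gaussian moments,
and the factor $t^\delta$ appears because the rescaling contributes $2^{nH}$ while
$|t-s|^{H-\delta}$ only recovers $2^{n(H-\delta)}$, leaving $2^{n\delta}\sim t^\delta$
to spare. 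Summing the tail probabilities over $n$ and invoking Borel--Cantelli again
produces a single random constant $c(p)$, finite a.s.\ and with moments of all
orders because it is a maximum of finitely many Gaussian-tailed variables on the
exceptional blocks together with a Gaussian-tailed tail contribution.

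The main obstacle, and the step deserving the most care, is constructing a
\emph{single} random variable $c(p)=c(p,\omega)$ that works uniformly for all
$t>0$ and that genuinely \emph{has moments of every order}. The Borel--Cantelli
argument only controls $\omega$-by-$\omega$ behaviour for $n$ beyond some random
threshold $N(\omega)$, so I would define $c(p)$ as the supremum over all dyadic
blocks of the normalized block-suprema (respectively normalized Hölder seminorms),
and then the real work is showing this overall supremum has finite exponential
moments. This follows from summing the Gaussian tail bounds: if each block
contributes a tail $\exp(-c\,n^{2p}/\text{something})$ that is summable fast enough,
the union bound gives a Gaussian-type tail for $c(p)$, hence all moments finite.
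The delicate point is calibrating the logarithmic factor $|\log t|^p$ so that the
per-block exceedance probabilities are summable while the maximizing random constant
still retains integrable tails; this is precisely the trade-off that forces the
exponent $p\ge1$ on the logarithm and the strict inequality $\delta<H$ in part~(ii),
and verifying it rigorously is where the bulk of the estimation lies.
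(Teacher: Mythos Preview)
The paper does not prove this proposition: it is quoted verbatim from \cite{KMM2015} (with a pointer to \cite[Sec.~B.3.5]{KMR2017}) and used as a black box, so there is no ``paper's own proof'' to compare against. Your proposal is therefore an independent attempt to supply an argument the authors deliberately omitted.

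As a proof strategy your outline is sound and is in fact the standard route to such uniform envelopes for self-similar Gaussian processes: dyadic localisation, self-similarity to reduce to a fixed compact, Borell--TIS (or Fernique) for Gaussian tails on the normalized supremum or H\"older seminorm, and Borel--Cantelli to patch the blocks together. You have also correctly identified the only genuinely delicate point, namely upgrading the a.s.\ finiteness of the block-wise supremum to finiteness of all moments of the global constant $c(p)$; this does follow from the summable Gaussian tail bounds via a union-bound argument as you indicate. Two small places where your sketch would need tightening if you wrote it out in full: the treatment of the regime $t\in(0,1]$ (and the neighbourhood of $t=1$ where $\log t$ vanishes) must be handled separately rather than ``absorbed'' by the $\vee 1$, since the dyadic blocks $2^{n}$ with $n\le 0$ behave differently; and in part~(ii) you should make explicit that the increment bound must hold for \emph{all} pairs $t>s>0$, not just pairs within the same dyadic block, which requires either a telescoping argument across blocks or a direct control of cross-block increments.
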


Since we are interested in the asymptotic results for large $t$, we can assume that $t\gg1$. In this case, we have an upper bound $\abs{\log t}  \le C t^\delta$ for any $\delta>0$.
This means that the results of Proposition \ref{prop:KMM-bounds} can be written in the following simplified form, which is more convenient for our calculations.

\begin{corollary}\label{boundsup}
\begin{enumerate}[(i)]
\item For any $\delta>0$ there exists $t_\delta(\omega)>0$ such that
\begin{equation}\label{eq:fbm-bound}
\sup_{0< s \le t}
\abs{B^H_s} \le c(\omega) t^{H+\delta},
\quad t > t_\delta(\omega),
\end{equation}
where $c(\omega)$ is a random variable having moments of any order.
\item For any $\delta>0$ and $0 < \beta < H$ there exists $t_{\beta,\delta}(\omega)>0$ such that
\begin{equation}\label{eq:incr-bound}
\abs{B^H_s - B^H_t} \le c(\omega)\abs{t - s}^{H-\beta} t^{\beta+\delta},
\quad t>s> t_{\beta,\delta}(\omega),
\end{equation}
where $c(\omega)$ is a random variable having moments of any order.
\end{enumerate}
\end{corollary}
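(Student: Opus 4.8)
The plan is to derive both bounds directly from Proposition~\ref{prop:KMM-bounds} by trading the logarithmic factors for an arbitrarily small power of $t$, using the elementary deterministic fact that for every $\gamma>0$ one has $\abs{\log t}\le C_\gamma t^{\gamma}$ for all $t$ beyond some deterministic threshold. Fixing $p=1$ throughout suffices (any fixed $p\ge1$ works identically, since then $\abs{\log t}^p\le C_\gamma^p t^{p\gamma}$). The two parts then follow by a bookkeeping argument in which every deterministic constant produced is absorbed into the random variable $c(\omega)$; because multiplying a random variable having moments of all orders by a deterministic constant preserves that property, the resulting $c(\omega)$ still has moments of every order.

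For part (i), I would start from $\sup_{0<s\le t}\abs{B^H_s}\le((t^H\abs{\log t}^p)\vee1)\,c(p)$ and apply the logarithmic estimate with $\gamma=\delta$, so that $\abs{\log t}\le C t^{\delta}$ for $t$ large. This gives $t^H\abs{\log t}^p\le C t^{H+\delta}$, and for $t$ beyond a (deterministic) threshold this quantity exceeds $1$, so the maximum with $1$ is resolved in favour of the first term. Taking $t_\delta(\omega)$ to be this threshold (which may in fact be chosen deterministic) and $c(\omega)=C\,c(p)$ then yields \eqref{eq:fbm-bound}.

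For part (ii), the key point is that the free exponent $\delta$ in Proposition~\ref{prop:KMM-bounds}(ii) should be identified with the parameter $\beta$ of the corollary; the hypothesis $0<\beta<H$ is exactly what legitimizes applying the proposition. This produces $\abs{B^H_s-B^H_t}\le\abs{t-s}^{H-\beta}((t^\beta\abs{\log t}^p)\vee1)\,c(p)$, and the same logarithmic estimate turns $t^\beta\abs{\log t}^p$ into $Ct^{\beta+\delta}$ for large $t$, with the $\vee1$ again resolved past a threshold $t_{\beta,\delta}(\omega)$. Since the controlling factor $t^{\beta+\delta}$ and the logarithm depend only on the larger argument $t$, and we require $t>s>t_{\beta,\delta}(\omega)$, no separate control on the smaller argument $s$ is needed.

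There is no genuine analytic difficulty here: the statement is a convenience reformulation of Proposition~\ref{prop:KMM-bounds}. The only points deserving care are formal. First, one must check that the replacement of $\abs{\log t}^p$ by a power $t^\delta$ holds with a constant independent of $\omega$, so that all randomness stays inside $c(p)$. Second, one must confirm that absorbing the deterministic constants into $c(\omega)$ does not destroy the \emph{moments of any order} property, which it does not, by the observation of the first paragraph. I therefore expect the whole proof to be a short chain of such absorptions rather than any substantive estimate.
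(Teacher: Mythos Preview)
Your proposal is correct and follows exactly the approach the paper indicates: replace the logarithmic factor in Proposition~\ref{prop:KMM-bounds} by a small power of $t$ via $\abs{\log t}\le C t^{\delta}$ for large $t$, identifying the proposition's H\"older exponent with $\beta$ in part~(ii) and absorbing deterministic constants into $c(\omega)$. The paper gives only the one-line justification preceding the corollary, and your write-up merely makes that argument explicit.
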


Following \cite{MSTA2}, we can define the fractional diffusion Bessel process $X^H$ for $H < 1/2$ as the pointwise limit as $\varepsilon\downarrow0$ of the processes $Y^\varepsilon$ that satisfy the SDE of the
following type (also written in the integral form):
\begin{equation}\label{eq:sde-eps}
 X^\varepsilon_t =X_0+
a\int_0^t\frac{1}{X^\varepsilon_s \ind_{X^\varepsilon_s>0} + \varepsilon} ds + \sigma  B^H_t,
\quad
X_0 > 0.
\end{equation}
This limiting  process is well defined, as it is stated in the next proposition, and also has the following properties.
\begin{proposition}\label{prop:msta}
\begin{enumerate}[(i), left=-7pt]
\item The limit
$X^H_t(\omega) = \lim_{\varepsilon\to0} Y^\varepsilon_t(\omega)$
 exists, is nonnegative a.s.\ and is positive a.e.\ with
respect to the Lebesgue measure a.s.
\item $X^H$ satisfies the equation of the type
\[X_t^H = X_\alpha + a\int_\alpha^t\frac{ds}{X_s^H} + \sigma \left(B^H_t-B^H_\alpha\right),\]
for all $t \in [\alpha, \beta]$ where $(\alpha, \beta)$ is any interval of $X^H$'s strict positivity.

\item For all $t\ge 0$,
$\int_0^t\frac{1}{X_s^H}ds<\infty$.

\item For all $t \ge 0$,
$X_t^H \ge X_0 + a\int_0^t\frac{ds}{X_s^H} + \sigma B^H_t$.

\item The set $\{t > 0 \mid X^H_t > 0\}$ is open in the natural topology on $\Real$.

\item The trajectories of the process $X^H= \{X^H_t, t \ge0\}$ are continuous a.e.\ on $\mathbb R^+$ a.s.
\end{enumerate}
\end{proposition}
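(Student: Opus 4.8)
The plan is to build everything from the monotone structure of the approximations. For each fixed $\varepsilon>0$ the coefficient $x\mapsto \frac{a}{x\ind_{x>0}+\varepsilon}$ is bounded by $a/\varepsilon$ and globally Lipschitz, so, reading \eqref{eq:sde-eps} pathwise as an integral equation with the continuous forcing term $\sigma B^H_\cdot(\omega)$, a standard Picard argument yields a unique continuous solution $Y^\varepsilon(\omega)$ for a.a.\ $\omega$. Since $\varepsilon\mapsto \frac{a}{x\ind_{x>0}+\varepsilon}$ is nonincreasing, the pathwise comparison principle for such equations gives $Y^{\varepsilon_1}_t(\omega)\ge Y^{\varepsilon_2}_t(\omega)$ whenever $\varepsilon_1\le\varepsilon_2$. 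Hence $\varepsilon\mapsto Y^\varepsilon_t(\omega)$ is nondecreasing as $\varepsilon\downarrow0$ and the pointwise limit $X^H_t(\omega)=\lim_{\varepsilon\downarrow0}Y^\varepsilon_t(\omega)\in[0,+\infty]$ exists. As a pointwise supremum of continuous functions, $X^H$ is lower semicontinuous in $t$, so $\set{t>0\mid X^H_t>0}$ is open; this already settles~(v).

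The main obstacle is to prove that the limit is finite, i.e.\ that the nondecreasing quantities $aI^\varepsilon_t:=a\int_0^t\frac{ds}{Y^\varepsilon_s\ind_{Y^\varepsilon_s>0}+\varepsilon}=Y^\varepsilon_t-X_0-\sigma B^H_t$ stay bounded as $\varepsilon\downarrow0$. The part of this integral over $\set{s\le t\mid Y^\varepsilon_s\ge1}$ is at most $t$, since there the integrand is $\le1$. On each maximal excursion $[\theta_1,\theta_2]$ of $Y^\varepsilon$ below level $1$ the net increment of $Y^\varepsilon$ vanishes, so that $a\int_{\theta_1}^{\theta_2}\frac{ds}{Y^\varepsilon_s\ind_{Y^\varepsilon_s>0}+\varepsilon}=-\sigma\bigl(B^H_{\theta_2}-B^H_{\theta_1}\bigr)$; summing these identities and estimating the resulting increments of $B^H$ over the disjoint excursion intervals by the bound of Corollary~\ref{boundsup}(ii) produces a bound uniform in $\varepsilon$, as in \cite{MSTA2}. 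Controlling this sum over the a priori numerous excursions is the delicate point; note that on any time interval where $X_0+\sigma B^H_s$ stays positive the process $Y^\varepsilon$ is bounded below by $X_0-\sigma\sup_{r\le t}\abs{B^H_r}$ and the estimate is elementary, so the difficulty is localized to the random set where $X_0+\sigma B^H_s$ may become nonpositive.

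Granting finiteness, the remaining assertions of~(i) follow quickly. For a.e.\ positivity: on $\set{s\le t\mid X^H_s=0}$ we have $Y^\varepsilon_s\le X^H_s=0$, so the indicator vanishes and the integrand equals $1/\varepsilon$ there; hence $aI^\varepsilon_t\ge \frac a\varepsilon\,\bigl\lvert\set{s\le t\mid X^H_s=0}\bigr\rvert$, and boundedness of $I^\varepsilon_t$ forces this Lebesgue measure to be $0$. For nonnegativity, fix $t$ with $Y^\varepsilon_t<0$ and let $\theta$ be the last zero of $Y^\varepsilon$ before $t$ (it exists since $Y^\varepsilon_0=X_0>0$); on $(\theta,t]$ the drift equals $a/\varepsilon$, so $0>Y^\varepsilon_t=\frac a\varepsilon(t-\theta)+\sigma\bigl(B^H_t-B^H_\theta\bigr)$, whence $\frac a\varepsilon(t-\theta)\le\sigma\abs{B^H_t-B^H_\theta}\le c(\omega)(t-\theta)^{H-\beta}t^{\beta+\delta}$ by Corollary~\ref{boundsup}(ii) (for the excluded small values of $t$ the local H\"older continuity of $B^H$ gives the same bound). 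Since $1-(H-\beta)>0$, this forces $t-\theta\to0$, so $Y^\varepsilon_t\ge-\sigma\abs{B^H_t-B^H_\theta}\to0$ as $\varepsilon\downarrow0$; thus $X^H_t\ge0$.

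Finally, (iii), (iv), (ii) and (vi) are obtained by passing to the limit in \eqref{eq:sde-eps}. The integrands $\frac{1}{Y^\varepsilon_s\ind_{Y^\varepsilon_s>0}+\varepsilon}\ge0$ converge, for a.e.\ $s$ (those with $X^H_s>0$, i.e.\ a.e.\ by the previous step), to $1/X^H_s$; Fatou's lemma then gives $\int_0^t\frac{ds}{X^H_s}\le\liminf_\varepsilon I^\varepsilon_t=\frac1a\bigl(X^H_t-X_0-\sigma B^H_t\bigr)<\infty$, which is simultaneously~(iii) and, after rearrangement,~(iv). On a maximal positivity interval $(\alpha,\beta)$ the lower semicontinuous function $X^H$ attains a strictly positive minimum $c$ on each compact subinterval $[\alpha',\beta']$; a Dini-type compactness argument then yields $\inf_{[\alpha',\beta']}Y^\varepsilon\ge c/2$ for all small $\varepsilon$, so the integrands are dominated by $2/c$ and dominated convergence upgrades \eqref{eq:sde-eps} to the equation in~(ii). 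In particular $X^H$ solves an ODE with continuous right-hand side on $(\alpha,\beta)$ and is therefore continuous on the open set $\set{t>0\mid X^H_t>0}$; since its complement $\set{X^H=0}$ is Lebesgue-null, the discontinuities of $X^H$ form a null set, which is~(vi).
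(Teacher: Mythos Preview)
The paper does not prove Proposition~\ref{prop:msta}; it is quoted from \cite{MSTA2}, where the corresponding statements are established for the more general equation~\eqref{eq:sde-cir}. So there is no in-paper argument to compare against, and the comments below concern your sketch on its own merits.

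Your architecture is the natural one and matches what is done in \cite{MSTA2}: monotonicity in $\varepsilon$ gives existence of the limit and lower semicontinuity (hence (v)); a uniform bound on $I^\varepsilon_t$ gives finiteness and a.e.\ positivity; Fatou yields (iii)--(iv); and the one-sided Dini argument (which, as you use it, needs only continuity of the $Y^\varepsilon$ and lower semicontinuity of $X^H$, not continuity of the limit) gives (ii) and then (vi). Your nonnegativity argument via the last zero of $Y^\varepsilon$ is correct; just note that Corollary~\ref{boundsup} is stated only for large $t$, so on bounded intervals you should invoke the local $(H-\beta)$-H\"older continuity of $B^H$ directly, as you remark in passing.

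The step you single out as ``delicate'' is in fact less so than you suggest, and your sketch underplays the key observation. On a completed excursion of $Y^\varepsilon$ below level $1$ you have the identity you wrote, and on the complementary intervals where $Y^\varepsilon\ge1$ the analogous identity reads $\sigma\bigl(B^H_{\theta_1^{i+1}}-B^H_{\theta_2^{i}}\bigr)=-a\int_{\theta_2^{i}}^{\theta_1^{i+1}}\frac{ds}{Y^\varepsilon_s\ind_{Y^\varepsilon_s>0}+\varepsilon}\le0$. Hence the sequence $B^H_{\theta_1^1},B^H_{\theta_2^1},B^H_{\theta_1^2},\ldots$ is nonincreasing, and the sum of excursion contributions telescopes:
\[
a\!\!\int_{\{s\le t:\,Y^\varepsilon_s<1\}}\!\!\frac{ds}{Y^\varepsilon_s\ind_{Y^\varepsilon_s>0}+\varepsilon}
=\sigma\sum_i\bigl(B^H_{\theta_1^i}-B^H_{\theta_2^i}\bigr)
\le \sigma\bigl(B^H_{\theta_1^1}-B^H_{\theta_2^N}\bigr)
\le 2\sigma\sup_{s\le t}\abs{B^H_s},
\]
up to two boundary terms (the first excursion may start at $0$ with $Y^\varepsilon_0=X_0$, and the last may be incomplete at time $t$) which are controlled by $\abs{X_0-1}+\sup_{s\le t}\abs{Y^\varepsilon_s-1}$ on the last excursion and cause no difficulty. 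Together with the trivial bound $\int_{\{Y^\varepsilon\ge1\}}\le t$ this gives $aI^\varepsilon_t\le at+C(\omega,t)$ uniformly in $\varepsilon$, without any appeal to $p$-variation or to the H\"older estimate of Corollary~\ref{boundsup}(ii). Once you make this telescoping explicit, the proof is complete.
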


\section{Equation and asymptotic properties of the trajectories of fractional diffusion Bessel process with small Hurst index}
\label{sec:equation}

According to Proposition~\ref{prop:msta} $(iv)$, we have inequality
\begin{equation}\label{eq:sde-ineq}
X_t^H \ge X_0 + a\int_0^t\frac{ds}{X_s^H} + \sigma B^H_t.
\end{equation}
Denote
$L^H_t \coloneqq X_t^H - X_0 - a\int_0^t\frac{ds}{X_s^H} - \sigma B^H_t$.
Obviously we get the equation
\begin{equation}\label{eq:sde-lt}
X_t^H =  X_0 + a\int_0^t\frac{ds}{X_s^H} + \sigma B^H_t + L^H_t.
\end{equation}
The first result in this section establishes the properties of $L^H$.
\begin{theorem}
The process $L^H$ is nondecreasing, $L^H_0 = 0$, $L^H$ is continuous  a.e.\ on $\mathbb R^+$ a.s.\ and can increase only at points $t$ in which $X^H_t=0$.
In this sense, $L^H$ is a reflection function and the process $X^H$ satisfying the equation \eqref{eq:sde-lt} is a reflected fractional diffusion Bessel process.
\end{theorem}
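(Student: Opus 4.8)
The plan is to read off the four asserted properties from the regularized equation~\eqref{eq:sde-eps} together with the structural facts collected in Proposition~\ref{prop:msta}. The value $L^H_0 = 0$ is immediate from the definition of $L^H$, since at $t=0$ every term on the right-hand side vanishes except $X_0$, which cancels. Continuity a.e.\ is also essentially free: the three terms $X_0$, $a\int_0^t \frac{ds}{X_s^H}$ and $\sigma B^H_t$ are continuous everywhere (the integral is absolutely continuous because $\int_0^t \frac{ds}{X_s^H} < \infty$ by Proposition~\ref{prop:msta}~$(iii)$, and $B^H$ has continuous paths), so $L^H$ inherits the a.e.\ continuity of $X^H$ guaranteed by Proposition~\ref{prop:msta}~$(vi)$.

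The heart of the argument is monotonicity, and here I would return to the approximation~\eqref{eq:sde-eps}. Setting $A^\varepsilon_t \coloneqq X^\varepsilon_t - X_0 - \sigma B^H_t = a\int_0^t \frac{ds}{X^\varepsilon_s \ind_{X^\varepsilon_s>0}+\varepsilon}$, the integrand is strictly positive, so each $A^\varepsilon$ is nondecreasing; in particular, for $0 \le u \le t$,
\[
A^\varepsilon_t - A^\varepsilon_u = a\int_u^t \frac{ds}{X^\varepsilon_s \ind_{X^\varepsilon_s>0}+\varepsilon} \ge 0.
\]
Letting $\varepsilon \downarrow 0$ along a sequence, the left-hand side converges to $A_t - A_u$, where $A_t \coloneqq X^H_t - X_0 - \sigma B^H_t = \lim_{\varepsilon} A^\varepsilon_t$ by the pointwise convergence $X^\varepsilon \to X^H$ of Proposition~\ref{prop:msta}~$(i)$. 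For a.e.\ $s$ one has $X^H_s > 0$ (again $(i)$), so $X^\varepsilon_s \to X^H_s > 0$ forces the integrand to converge pointwise a.e.\ to $\frac{1}{X^H_s}$. Applying Fatou's lemma to the nonnegative integrands then yields
\[
A_t - A_u \ge a\int_u^t \frac{ds}{X^H_s},
\]
which is exactly $L^H_t - L^H_u \ge 0$. Hence $L^H$ is nondecreasing.

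Finally, to see that $L^H$ can grow only on the contact set $\{t : X^H_t = 0\}$, I would use that $\{t : X^H_t > 0\}$ is open by Proposition~\ref{prop:msta}~$(v)$, hence a countable disjoint union of open intervals $(\alpha,\beta)$. On each such interval Proposition~\ref{prop:msta}~$(ii)$ gives $X^H_t - X^H_\alpha = a\int_\alpha^t \frac{ds}{X^H_s} + \sigma(B^H_t - B^H_\alpha)$, and subtracting this from the definition of $L^H$ shows $L^H_t - L^H_\alpha = 0$, i.e.\ $L^H$ is constant on every positivity interval. Since $X^H \ge 0$ a.s.\ by $(i)$, the complement of the positivity set is precisely $\{X^H_t = 0\}$, and a nondecreasing function that is locally constant off this set can increase only there.

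The main obstacle I anticipate is the interchange of limit and integral underpinning the monotonicity step. The point worth stressing is that Fatou's lemma delivers only an inequality, but it is an inequality in exactly the direction needed, and it becomes usable precisely because the full limit $A_t - A_u$ is already known to exist from the pointwise convergence of $X^\varepsilon$. A secondary technical care is the joint null-set bookkeeping: the convergence $X^\varepsilon_t \to X^H_t$ holds for each fixed $t$ off a null set, so a Fubini argument is needed to pass to the statement ``for a.s.\ $\omega$, for a.e.\ $s$'' before Fatou can be applied pathwise.
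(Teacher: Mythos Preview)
Your proof is correct and follows essentially the same route as the paper: $L^H_0=0$ and a.e.\ continuity are immediate from the definition and Proposition~\ref{prop:msta}; monotonicity is obtained by passing to the limit in the approximating equation~\eqref{eq:sde-eps} and invoking Fatou's lemma to compare $\lim_\varepsilon\int_u^t\frac{a\,ds}{X^\varepsilon_s\ind_{X^\varepsilon_s>0}+\varepsilon}$ with $\int_u^t\frac{a\,ds}{X^H_s}$; and the ``increases only at zeros'' property is read off from Proposition~\ref{prop:msta}~$(ii)$ on positivity intervals. Your explicit remark about the null-set bookkeeping needed before applying Fatou pathwise is a point the paper leaves implicit.
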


\begin{proof}
Obviously, $L^H_0 = 0$, $L^H_t \ge 0$, and $L^H$ is continuous a.e. as a difference of continuous a.e.  process $X^H$ and continuous part  $X_0 + a\int_0^t\frac{ds}{X_s^H} + \sigma B^H_t$.
According to property $(ii)$ from Proposition~\ref{prop:msta}, at any point $t_0$, where $X^H_{t_0}>0$ (and which is surrounded by some interval $(t_0-h, t_0+h)$ where $X^H_{t_0}>0$) we have that
\[
X^H_{t_0+} - X^H_{t_0-} = a\int_{t_0-}^{t_0+}\frac{ds}{X_s^H}
+ \sigma\left(B^H_{t_0+} - B^H_{t_0-}\right),
\]
therefore, $L^H_{t_0+} - L^H_{t_0-} = 0$, and $L^H$ does not increase in $t_0$.
Finally, according to the Fatou lemma, similarly to the proof of Corollary 3.1 from \cite{MSTA2},
$\int_s^t \frac{du}{X^H_u} \le \liminf_{\varepsilon\to0} \int_s^t \frac{du}{Y^\varepsilon_u \ind_{Y^\varepsilon_u>0}+\varepsilon}$,
whence for any $0\le s<t$
\begin{align*}
X^H_t-X^H_s &= \lim_{\varepsilon\to0} \left(Y^\varepsilon_t - Y^\varepsilon_s\right)
= \int_s^t \frac{a du}{Y^\varepsilon_u \ind_{Y^\varepsilon_u>0}+\varepsilon}
+ \sigma \left(B^H_t-B^H_s\right)
\\
&\ge \int_s^t \frac{a du}{X^H_u}
+ \sigma \left(B^H_t-B^H_s\right),
\end{align*}
whence $L^H_t - L^H_s \ge 0$.
Theorem is proved.
\end{proof}

Consider now some asymptotic properties of trajectories of the fractional diffusion Bessel process with Hurst index $H<1/2$. In order not to repeat this every time, in all proofs we always assume that we consider only the arguments exceeding the values $t_{\delta}(\omega)$ or $t_{\beta,\delta}(\omega)$ from Corollary \ref{boundsup}. It is possible because in any proof this value will be fixed and will not change.  The first theorem states that  in the upper limit the trajectories exceed any power function with power index less than $1/2$ and is comparable to power function with power index   $1/2$. In some sense, this result is a bit unexpected because fractional Brownian motion itself is asymptotically negligible w.r.t. any power function with power index bigger than $H$, according to upper bound \eqref{eq:fbm-bound}. Without loss of generality and for the technical simplicity, let us assume that $\sigma=1$, but of course, all the results formulated below, are valid for fractional diffusion Bessel process with any $\sigma>0.$
\begin{theorem}\label{th:limsup}
\begin{enumerate}[(i),left=-14.65pt]
\item For any $\alpha\in(0,\frac12)$, with probability 1,
$\limsup\limits_{t\to \infty} \frac{X_t^H}{t^{\alpha}} = +\infty$.
\item With probability 1,
$\limsup\limits_{t\to \infty} \frac{X_t^H}{\sqrt{t}} \ge \sqrt{2a}$.
\end{enumerate}
\end{theorem}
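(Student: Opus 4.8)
The plan is to drive everything from the inequality in Proposition~\ref{prop:msta}$(iv)$, which under the normalization $\sigma=1$ reads $X_t^H \ge X_0 + a\int_0^t\frac{ds}{X_s^H} + B^H_t$, combined with the fact that the fractional Brownian term is asymptotically negligible relative to $t^{1/2}$. Indeed, fixing $\delta>0$ so small that $H+\delta<\tfrac12$, Corollary~\ref{boundsup}$(i)$ gives $\abs{B^H_t}\le c(\omega)\,t^{H+\delta}=o(t^{1/2})$ for $t$ large. The engine of both parts is a self-improvement (bootstrap) argument: any upper bound on the growth of $X^H$ forces a \emph{lower} bound on the integrand $1/X_s^H$, and integrating this lower bound produces a growth rate for $X^H$ that is strictly faster than the assumed upper bound.

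For part $(i)$ I would argue by contradiction. Suppose $\limsup_{t\to\infty} X_t^H/t^\alpha = C<\infty$ for some $\alpha\in(0,\tfrac12)$. By definition of $\limsup$ there exist $K>0$ and $t_0=t_0(\omega)$ with $X_t^H\le K t^\alpha$ for all $t\ge t_0$ (this is a pointwise bound for all large $t$, not merely along a subsequence), hence $1/X_s^H\ge K^{-1}s^{-\alpha}$ for a.e.\ $s\ge t_0$; the bound is trivial on the zero-measure set where $X_s^H=0$, so integrability from Proposition~\ref{prop:msta}$(iii)$ is respected. Substituting into the displayed inequality and using $a\int_{t_0}^t s^{-\alpha}\,ds=\tfrac{a}{1-\alpha}(t^{1-\alpha}-t_0^{1-\alpha})$, I obtain $X_t^H \ge \tfrac{a}{K(1-\alpha)}\,t^{1-\alpha}(1+o(1)) - c(\omega)t^{H+\delta}$. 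Since $\alpha<\tfrac12$ yields $1-\alpha>\tfrac12>H+\delta$ and $1-\alpha>\alpha$, the right-hand side grows like $t^{1-\alpha}$, contradicting $X_t^H\le K t^\alpha$. Thus $\limsup_{t\to\infty} X_t^H/t^\alpha=+\infty$.

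For part $(ii)$ I would run the same mechanism at the critical exponent $\tfrac12$, now tracking the constant. Put $L=\limsup_{t\to\infty} X_t^H/\sqrt t$; the case $L=+\infty$ is immediate, so assume $L<\infty$. For any $\varepsilon>0$ there is $t_0$ with $X_t^H\le (L+\varepsilon)\sqrt t$ for $t\ge t_0$, whence $1/X_s^H\ge((L+\varepsilon)\sqrt s)^{-1}$ a.e. Integrating via $a\int_{t_0}^t\frac{ds}{(L+\varepsilon)\sqrt s}=\tfrac{2a}{L+\varepsilon}\sqrt t(1+o(1))$ and using that the fBm term is $o(\sqrt t)$ gives $X_t^H\ge \tfrac{2a}{L+\varepsilon}\sqrt t(1+o(1))$. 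Dividing by $\sqrt t$ and taking $\limsup$ yields $L\ge \tfrac{2a}{L+\varepsilon}$; letting $\varepsilon\downarrow0$ gives $L^2\ge 2a$, i.e.\ $L\ge\sqrt{2a}$. (The same inequality forbids $L=0$, since it would read $0\ge 2a/\varepsilon>0$, so $L>0$ and the division is legitimate; part $(ii)$ is therefore self-contained and does not rely on part $(i)$.)

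The main obstacle I anticipate is purely one of bookkeeping rather than ideas: one must keep the fractional-Brownian contribution strictly subordinate to the drift-generated power $t^{1/2}$, which is precisely where the hypothesis $H<\tfrac12$ is used, and one must confirm that the assumed upper bounds hold for \emph{all} large $t$, which follows directly from the meaning of $\limsup$. No regularity of $X^H$ beyond the integrability of $1/X^H$ is required, so continuity plays no role here.
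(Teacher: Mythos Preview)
Your proposal is correct and follows essentially the same route as the paper: both parts argue by bounding $X^H$ below via Proposition~\ref{prop:msta}$(iv)$, using an assumed upper bound $X^H_s\le K s^\alpha$ (resp.\ $\le(L+\varepsilon)\sqrt s$) to force a lower bound on $\int 1/X_s^H\,ds$, and invoking Corollary~\ref{boundsup}$(i)$ with $H+\delta<\tfrac12$ to make the fBm term negligible. Your packaging of part~$(ii)$ as the self-improvement inequality $L\ge 2a/(L+\varepsilon)\Rightarrow L^2\ge 2a$ is a minor cosmetic variation on the paper's contradiction with a fixed level $\varepsilon<\sqrt{2a}$, but the mechanism is identical.
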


\begin{proof}
$(i)$ As we know, $\prob(\Omega_0) = 1$,
$\Omega_0 = \set{\omega: c(\omega)<\infty}$, where $c(\omega)$ is taken from \eqref{eq:fbm-bound} Fix some $\omega\in \Omega_0$ and omit argument $\omega$ in what follows. In particular, denote $C_1=c(\omega)$.
We need to prove that for any $t\ge0$, $\alpha\in(0,\frac12)$, and $C>0$, there exists $t_1>t$ such that
$X^H_{t_1} \ge C t_1^\alpha$. Without loss of generality we can assume that $t>1.$
Assume the converse, i.e., there exist  $t>1$, $\alpha\in(0,\frac12)$, and $C>0$ such that
$X^H_{s} < C s^\alpha$ for any $s>t$.
Then for any $u>t$,
$a \int_t^u \frac{ds}{X^H_s} \ge \frac{a}{C} \cdot \frac{u^{1-\alpha} - t^{1-\alpha}}{1-\alpha}$.
According to upper bound \eqref{eq:fbm-bound},  $\abs{B^H_t}\le C_1 t^{H+\delta}$, where $\delta>0$ can be any number. In particular, we can chose $\delta>0$  sufficiently small, so that $H+\delta<1-\alpha$.
Then for any $u\ge t$
\begin{equation}\label{eq:contradiction}
Cu^\alpha > X^H_u \ge \frac{a}{C} \cdot \frac{u^{1-\alpha} - t^{1-\alpha}}{1-\alpha} -  C_1 u^{H+\delta}.
\end{equation}
It follows from \eqref{eq:contradiction} that for any $u\ge t$
\begin{equation}\label{eq:contradiction1}
 u^{1-\alpha} \ge t^{1-\alpha} +   \frac{C(1-\alpha) }{a} \cdot (Cu^\alpha+C_1 u^{H+\delta}),
\end{equation}  but $1-\alpha>\alpha\vee (H+\delta)$, and therefore inequality \eqref{eq:contradiction1} cannot hold for all $u>t$.
This contradiction proves $(i)$.

\bigskip

$(ii)$
We need to prove that for any $t\ge0$ and $\varepsilon<\sqrt{2a}$ there exists $t_1>t$ such that
$X^H_{t_1} > \varepsilon t_1^{1/2}$.
Assume that there exist $t\ge0$ and $\varepsilon<\sqrt{2a}$ such that $X^H_{u}\le \varepsilon u^{1/2}$ for all $u\ge t$.
As before, we can assume that $t>1$ and that $H+\delta<1/2$. Then for all $u\ge t$
\[
\varepsilon u^{1/2} \ge X^H_u \ge a \int_t^u\frac{ds}{X^H_s} -  C_1u^{H+\delta}
\ge \frac{2a}{\varepsilon}\left(u^{1/2} - t^{1/2}\right) -  C_1 u^{H+\delta},
\]
whence  for all $u\ge t$
\begin{equation}\label{eq:contradiction2}
\left(\frac{2a}{\varepsilon} - \varepsilon\right) u^{1/2}
\le \frac{2a}{\varepsilon} t^{1/2} + C_1 u^{H+\delta}.
\end{equation}
Due to inequality $\varepsilon<\sqrt{2a}$ we see that
$\frac{2a}{\varepsilon} - \varepsilon
>   0$, therefore
it is obvious that \eqref{eq:contradiction2} is impossible for all $u\ge t$.
\end{proof}
Now let's establish that the fractional diffusion Bessel process does not hit zero after some moment of time.

\begin{theorem}\label{th:liminf}
\begin{enumerate}[(i),left=0pt]
\item For any $H\in(0,\frac12)$ with probability 1,
$\liminf\limits_{t\to\infty} X^H_t  > 0$.

\item With probability 1 for any $\alpha>\frac12$,
$\limsup\limits_{t\to\infty} \frac{X^H_t}{t^\alpha} = 0$.
\end{enumerate}
\end{theorem}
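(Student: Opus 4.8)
The plan is to control $X^H$ through the auxiliary quantity $I(t):=\int_0^t\frac{ds}{X_s^H}$, which is finite by Proposition~\ref{prop:msta}(iii) and satisfies $I'(t)=1/X_t^H$ for a.e.\ $t$. The crucial observation is that for large $t$ the process is pinned to $aI(t)$ up to an error of order lower than $t^{1/2}$. Writing $C_1=c(\omega)$ for the constant in \eqref{eq:fbm-bound} and fixing $\delta\in(0,\frac12-H)$ so that $2(H+\delta)<1$, I would first establish a random $T=T(\omega)$ and a constant $C_3$ with
\begin{equation*}
aI(t)-C_1 t^{H+\delta}\;\le\; X_t^H\;\le\; aI(t)+C_3 t^{H+\delta},\qquad t\ge T .
\end{equation*}
The lower bound is precisely \eqref{eq:sde-ineq} combined with $B_t^H\ge -C_1 t^{H+\delta}$. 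For the upper bound I would use \eqref{eq:sde-lt} in the form $X_t^H=X_0+aI(t)+B_t^H+L_t^H$ together with the fact, established in the preceding theorem, that $L^H$ is nondecreasing and increases only where $X^H=0$: if $\tau^\ast=\sup\{s\le t:X_s^H=0\}$, then $L^H$ is constant on $(\tau^\ast,t]$, so evaluating \eqref{eq:sde-lt} at the zero $\tau^\ast$ gives $L_t^H=L_{\tau^\ast}^H=-X_0-aI(\tau^\ast)-B_{\tau^\ast}^H\le C_1 t^{H+\delta}$; if $X^H$ has no zero in $[0,t]$ then $L_t^H=0$ and the bound is immediate.

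With the two-sided pinning in hand, set $J(t):=aI(t)$, so $J'(t)=a/X_t^H$. The upper bound gives $J'(t)\bigl(J(t)+C_3 t^{H+\delta}\bigr)\ge a$; integrating from $T$ to $t$ and disposing of the cross term by parts, using $\int_T^t s^{H+\delta}J'(s)\,ds\le t^{H+\delta}J(t)$, I obtain $\tfrac12 J(t)^2+C_3 t^{H+\delta}J(t)\ge a(t-T)$. Solving this quadratic inequality for $J(t)\ge 0$ yields $J(t)\ge \sqrt{2a(t-T)}-C_3 t^{H+\delta}=\sqrt{2at}\,(1+o(1))$, where $2(H+\delta)<1$ is what makes the correction negligible. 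Hence $I(t)\ge c\sqrt t$ eventually, and the lower pinning gives $X_t^H\ge J(t)-C_1 t^{H+\delta}\to+\infty$; in particular $\liminf_{t\to\infty}X_t^H>0$, which proves (i).

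For (ii) I would feed this estimate back. Since $J(t)\ge\sqrt{at/2}\ge 2C_1 t^{H+\delta}$ for large $t$, the lower pinning gives $X_t^H\ge\tfrac12 J(t)=\tfrac a2 I(t)$, whence $I'(t)=1/X_t^H\le 2/(aI(t))$ and $\frac{d}{dt}I(t)^2\le 4/a$. Integrating, $I(t)\le C\sqrt t$, and the upper pinning then yields $X_t^H\le aI(t)+C_3 t^{H+\delta}=O(\sqrt t)$. Since $\alpha>\frac12$, this gives $X_t^H/t^\alpha=O(t^{1/2-\alpha})\to0$, proving (ii); note the argument in fact produces $\limsup_{t\to\infty}X_t^H/\sqrt t<\infty$, which complements the lower estimate of Theorem~\ref{th:limsup}.

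I expect the main obstacle to be the two-sided pinning, and specifically the upper estimate on the reflection term $L_t^H$: it requires the last-zero time $\tau^\ast$ to be well defined and $X_{\tau^\ast}^H=0$, which must be justified from the openness of $\{X^H>0\}$ in Proposition~\ref{prop:msta}(v) and the a.e.\ continuity in Proposition~\ref{prop:msta}(vi), handling the Lebesgue-null exceptional set with care. The subsequent analysis is an elementary ODE comparison for $I$, whose only delicate point is the integration by parts that removes the cross term $C_3 t^{H+\delta}J'(t)$, harmless precisely because $\delta$ may be taken with $H+\delta<\frac12$.
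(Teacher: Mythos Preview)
Your approach is genuinely different from the paper's and, where it works, sharper. For (i) the paper never touches $L^H$: it uses the increment bound \eqref{eq:incr-bound} together with Theorem~\ref{th:limsup}(i) to find a time $t_1$ with $X^H_{t_1}\ge C t_1^{\alpha}$ and then, on each excursion interval where the SDE holds without reflection, a case analysis (comparing the growth of the integral term with the H\"older increment of $B^H$) shows $X^H$ cannot descend below a fixed positive level after $t_1$. Your route via the ODE comparison for $J=aI$ is cleaner and would give $X^H_t\asymp\sqrt t$, but it needs the upper pinning $X^H_t\le aI(t)+C_3 t^{H+\delta}$ \emph{before} (i) is known.

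The gap is exactly in the bound on $L^H_t$. You argue that $L^H$ is constant on $(\tau^\ast,t]$ and hence $L^H_t=L^H_{\tau^\ast}\le C_1 t^{H+\delta}$. But the preceding theorem only gives a.e.\ continuity of $L^H$ (and the paper's numerics in fact display jumps), so a right-jump at $\tau^\ast$ is not excluded: then $L^H_t=L^H_{\tau^\ast+}>L^H_{\tau^\ast}$. Openness of $\{X^H>0\}$ guarantees $X^H_{\tau^\ast}=0$, hence $L^H_{\tau^\ast}=-X_0-aI(\tau^\ast)-B^H_{\tau^\ast}$, but it gives no control on $X^H_{\tau^\ast+}=L^H_t+X_0+aI(\tau^\ast)+B^H_{\tau^\ast}$, which is precisely the quantity you must bound. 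The ``care with the Lebesgue-null exceptional set'' you anticipate does not resolve this: a nondecreasing function always has at most countably many jumps, so a.e.\ continuity is automatic and rules nothing out. Without the upper pinning the inequality $J'(J+C_3 t^{H+\delta})\ge a$ fails and your proof of (i) does not close.

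Once (i) is established (by the paper's argument, say), $L^H$ is eventually constant and your two-sided pinning does hold for large $t$; your ODE comparison then yields $I(t)\asymp\sqrt t$ and hence $X^H_t=O(\sqrt t)$, which is stronger than the paper's (ii) (the paper uses a Cauchy mean-value step combined with Theorem~\ref{th:limsup}). So the second half of your plan is a genuine improvement, but it needs an independent proof of (i) as input.
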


\begin{proof}
$(i)$ Fix any $\omega_0\in\Omega_0$ and respective $C_1=c(\omega)$, where $\Omega_0$ was described in the proof of Theorem \ref{th:limsup}. Also, fix some sufficiently small $\delta>0$ and $\beta>0$ in \eqref{eq:incr-bound}, such that  $\beta+\delta<1, H+\beta+\delta<\frac12$ and consider $\alpha\in\left(H+\beta+\delta,\frac12\right)$. Finally, fix some   $t>1$.
According to Theorem \ref{th:limsup}, for any $C>C_1$ there exists $t_1\ge t$ depending on $C$ for which $X^H_{t_1}\ge   Ct_1^\alpha$.
Moreover, without loss of generality, we can assume that $t_1^{\alpha-H-\delta}>  2^{\delta+\beta}+1$ and $a^{1/2} t_1^{1/2-H-\delta}> \frac32C_1$.
Now our goal is to establish which lower level can be achieved by the process $X^H$ after moment $t_1$. Briefly speaking, our reasonings are of the following sort: assume that the process $X^H$ can achieve some small level. But for this time integral will increase, and it is necessary to compare its behaviour with the behaviour of fBm. In this connection, assume that there exists  such  $t_2>t_1$ that $\inf_{z \in [t_1,t_2]}X^H_{z}<\frac{C}{2}t_1^\alpha$ and  consider interval $[t_1,  t_2]$. Then
  there exists some $v\in[t_1,t_2)$ such that $X^H_z \ge   C z^{\alpha}$ for any $z\in[t_1,v ]$ but for any $z\in(v,t_2]$, $X^H_z < C v^{\alpha}$.
In this case  for any $z\in (v,t_2]$ we can bound $X^H_z$ from below, taking into account \eqref{eq:incr-bound}, considering the biggest possible value of the increment of fBm and taking it with minus, therefore considering the smallest possible value of $X^H_z$:
\begin{align}
 C v^{\alpha}&\ge X^H_z \ge X^H_v + a \int_v^z \frac{ds}{X^H_s} +  \left(B^H_z - B^H_v\right)
\notag\\
&\ge C v^{\alpha} + a\left(C v^{\alpha}\right)^{-1} (z-v) - C_1 z^{\beta+\delta} (z-v)^{H-\beta}.
\label{eq:lowbound}
\end{align}
Now the simplest way is to ignore the left-hand side of \eqref{eq:lowbound} considering instead  two cases.

$(a)$
Let $z-v \le v$.
Then obviously $z<2v$ and therefore
\begin{align*}
 X^H_z &\ge C v^\alpha - C_1 2^{\delta+\beta} v^{H+\delta}
\ge C_1 v^{H+\delta}\left( v^{\alpha-H-\delta} -   2^{\delta+\beta} \right)
\\
&\ge C_1 t_1^{H+\delta}\left(t_1^{\alpha-H-\delta} -   2^{\delta+\beta}\right)
\ge C_1 t_1^{H+\delta},
\end{align*}
according to assumption $t_1^{\alpha-H-\delta}>  2^{\delta+\beta}+1$.

$(b)$
Let $z-v > v$.
Then $z-v  >t_1$.  We can apply the inequality $b^2+d^2\ge 2bd$ to the sum $ C v^{\alpha} + a\left(C v^{\alpha}\right)^{-1} (z-v)$ and note that for small $\beta$ and $\delta$ such that $\beta+\delta<1$ we have that $z^{\beta+\delta}\le (z-v)^{\beta+\delta}+v^{\beta+\delta}\le 2(z-v)^{\beta+\delta}$.
Then we immediately get    that
\begin{align*}
X^H_z &\ge 2  a^{1/2} (z-v)^{1/2} - C_1 z^{\beta+\delta} (z-v)^{H-\beta}
\\
&\ge 2 (z-v)^{H+\delta} \left[a^{1/2} (z-v)^{1/2-H-\delta} - C_1
\right]
\\
&\ge 2v^{H+\delta} \left[  a^{1/2} v^{1/2-H-\delta} - C_1
\right]
\\
&\ge 2 t_1^{H+\delta}  \left[  a^{1/2} t_1^{1/2-H-\delta} - C_1
\right]
\ge C_1t_1^{H+\delta},
\end{align*}
according to assumption $a^{1/2} t_1^{1/2-H-\delta}> 3/2C_1$. It means that for any interval $[t_1, t_2],$ starting at the point $t_1$, we get the following alternative: either for any   $v\in[t_1,t_2)$ we have that  $X^H_v \ge   \frac{C}{2} t_1^{\alpha}$, or there exists some $v\in[t_1,t_2)$ such that $X^H_z \ge   C z^{\alpha}$ for any $z\in[t_1,v ]$, for any $z\in(v,t_2]$, $X^H_z < C v^{\alpha}$, however we established that in  this case  for any $z\in(v,t_2]$  $X^H_z  \ge C_1t_1^{H+\delta}$. In any case we get that $X^H_z>0$ for any $z\ge t_1$, and the statement $(i)$ is proved.

$(ii)$ Assume that there exists $t_n\to\infty$ such that
$\frac{X^H_{t_n}}{t_n^\alpha} = \varepsilon > 0$.
Consider
$\frac{X^H_{t_n}}{t_n^\alpha} =  \frac{X_0}{t_n^\alpha} + \frac{a}{t_n^\alpha} \int_0^{t_n}\frac{ds}{X_s^H} + \frac{B^H_{t_n}}{t_n^\alpha} + \frac{L^H_{t_n}}{t_n^\alpha}$,
and $\frac{X_0}{t_n^\alpha} \to 0$,
$\frac{B^H_{t_n}}{t_n^\alpha} \to 0$.
According to $(i)$, $X^H_t > 0$ starting from some $t_0$, therefore $L^H_t$ is constant starting from $t_0$, and
$\frac{L^H_{t_n}}{t_n^\alpha} \to 0$, $n \to \infty$.
Furthermore, according to Cauchy theorem, we can write
$\frac{a}{t_n^\alpha} \int_0^{t_n}\frac{ds}{X_s^H}
= \frac{a / X^H_{\theta_n}}{\alpha \theta_n^{\alpha-1}}
= \frac{a}{\alpha} \frac{\theta_n^{1-\alpha}}{X^H_{\theta_n}}$,
where we can assume without loss of generality that $\theta_n\to\infty$ as $n\to\infty$.
According to Theorem \ref{th:limsup}$(i)$,
$\frac{\theta_n^{1-\alpha}}{X^H_{\theta_n}} \to 0$, $n\to\infty$, and we get a contradiction, which proves $(ii)$.
\end{proof}

\section{Asymptotic properties of fractional diffusion Bessel process as a function of drift coefficient}
\label{sec:funofdrift}

Now  we consider the solutions of equation \eqref{eq:sde-lt} as the functions not only of $H$, but of $a$, and write them accordingly
\[
X_t^{H,a} = X_0 + a\int_0^t\frac{ds}{X_s^{H,a}} + \sigma B_t^H + L_t^{H,a}, \quad t\ge0.
\]
So,  $X^{H,a}$ is the notation for process $X^H$ if we wish to emphasize the dependence of its behavior of the value of $a>0$. First, let us consider the behavior of the trajectories of $X^{H,a} $ under increasing drift coefficient $a$. As before, without loss of generality, we assume that $\sigma=1.$
 
\begin{remark}
According to \cite[Thm.~3]{MSTA1},
$\prob\{\tau_0^{H,a}>T\} \to 1$,
as $a\to\infty$,
for any $T>0$, where
$\tau_0^{H,a} = \inf\{t>0 : X^{H,a}_t =\nolinebreak 0\}$.
\end{remark}

\begin{theorem}\label{th:large-a}
For any $\varepsilon>0$ there exists $a_0>0$ such that for any $a\ge a_0$
\[
\prob\Bigl\{\inf_{t\ge0} X^{H,a}_t = 0\Bigr\} < \varepsilon.
\]
\end{theorem}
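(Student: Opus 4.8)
The plan is to fix $\varepsilon>0$ and build a single ``good'' event $G$ with $\prob(G^c)<\varepsilon$ on which, once $a$ is large, the trajectory stays bounded away from zero for all $t\ge0$; this yields $\prob\{\inf_{t\ge0}X^{H,a}_t=0\}\le\prob(G^c)<\varepsilon$. I keep the auxiliary parameters $\delta,\beta,\alpha$ exactly as in the proof of Theorem~\ref{th:liminf}$(i)$ (so $0<\beta<H$, $H+\beta+\delta<\frac12$, $\alpha\in(H+\beta+\delta,\frac12)$), and I set $T_0:=(2^{\delta+\beta}+1)^{1/(\alpha-H-\delta)}$, the threshold appearing there. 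The event $G$ is assembled from four pieces: the truncation $\{c(\omega)\le M\}$ of the random constant from Corollary~\ref{boundsup}, the two events $\{t_\delta(\omega)\le T_2\}$ and $\{t_{\beta,\delta}(\omega)\le T_2\}$ making the bounds \eqref{eq:fbm-bound}--\eqref{eq:incr-bound} effective from time $T_2$ onward, and the event $\{\tau_0^{H,a}>T_1\}$ with a fixed $T_1>T_2$. Since $c(\omega)$ has moments of all orders and $t_\delta,t_{\beta,\delta}$ are a.s.\ finite, I would first choose $M$, then $T_2>T_0$ (and set $T_1=2T_2$), so that the first three pieces each fail with probability $<\varepsilon/4$; the last is controlled by the preceding Remark, which gives $\prob\{\tau_0^{H,a}\le T_1\}\to0$ as $a\to\infty$, hence $<\varepsilon/4$ for $a$ large. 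Thus $\prob(G^c)<\varepsilon$, with the thresholds $M,T_1,T_2$ chosen \emph{before} $a$.

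The core step is to show that on $G$, for $a$ large, the trajectory reaches a level high enough to trigger the self-stabilization of Theorem~\ref{th:liminf}$(i)$. On $\{\tau_0^{H,a}>T_1\}$ the reflection term vanishes on $[0,T_1]$, so $X^{H,a}_t=X_0+a\int_0^t (X_s^{H,a})^{-1}\,ds+B^H_t$ there, and this expression is genuinely continuous on $[0,T_1]$ by the local integrability of $(X^{H,a}_s)^{-1}$ from Proposition~\ref{prop:msta}$(iii)$. I would argue by contradiction: if $X^{H,a}_t\le K:=(M+1)T_1^\alpha$ for all $t\in[T_2,T_1]$, then $\int_{T_2}^{T_1}(X_s^{H,a})^{-1}\,ds\ge (T_1-T_2)/K$, and combining with $\abs{B^H_{T_1}}\le MT_1^{H+\delta}$ from \eqref{eq:fbm-bound} gives
\[
K\ge X^{H,a}_{T_1}\ge \frac{a(T_1-T_2)}{K}-MT_1^{H+\delta},
\]
i.e.\ $a(T_1-T_2)\le K^2+MT_1^{H+\delta}K$. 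The right-hand side is a fixed constant while the left-hand side tends to infinity with $a$, so this is impossible once $a$ is large. Hence there is a (random) time $\theta\in[T_2,T_1]$ with $X^{H,a}_\theta>K\ge(M+1)\theta^\alpha$.

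It then remains to feed this $\theta$ into the self-stabilization step. Because $\theta\ge T_2>T_0$, the threshold inequality $\theta^{\alpha-H-\delta}>2^{\delta+\beta}+1$ holds; because $\theta\ge T_2$ and $c(\omega)\le M$ on $G$, the second threshold $a^{1/2}\theta^{1/2-H-\delta}>\frac{3}{2}c(\omega)$ holds as soon as $a^{1/2}T_2^{1/2-H-\delta}>\frac{3}{2}M$, i.e.\ for $a$ large (here $\frac12-H-\delta>0$); and the increment bound \eqref{eq:incr-bound} is available from $\theta$ onward since $\theta\ge T_2\ge t_{\beta,\delta}(\omega)$ on $G$. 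Taking $C=M+1>c(\omega)$, the computation in cases $(a)$ and $(b)$ of the proof of Theorem~\ref{th:liminf}$(i)$ applies verbatim and yields $X^{H,a}_z\ge c(\omega)\,\theta^{H+\delta}>0$ for every $z\ge\theta$, so $\inf_{z\ge\theta}X^{H,a}_z>0$. Combined with the continuity and strict positivity of $X^{H,a}$ on the compact interval $[0,\theta]\subseteq[0,T_1]\subset[0,\tau_0^{H,a})$, this gives $\inf_{t\ge0}X^{H,a}_t>0$ on $G$, completing the estimate.

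I expect the main obstacle to be the uniform lower bound of the third paragraph: one must show that the strong drift forces the trajectory above the \emph{fixed} self-stabilizing level $K$ by the fixed time $T_1$, uniformly over $\{c(\omega)\le M\}$ and for all large $a$. The delicate point is the order of quantifiers — $M$, $T_1$, $T_2$ must be frozen before $a$ — so that $K$, and hence all the threshold conditions borrowed from Theorem~\ref{th:liminf}$(i)$, do not themselves depend on $a$; only then does letting $a\to\infty$ defeat the fixed right-hand side $K^2+MT_1^{H+\delta}K$ and force the trajectory past $K$.
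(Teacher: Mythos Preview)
Your proof is correct and follows the same overall architecture as the paper's: restrict to an event where the random constant $c(\omega)$ is bounded, use the preceding Remark to exclude zeros on a fixed initial interval, and then invoke the self-stabilization argument from the proof of Theorem~\ref{th:liminf}(i) to propagate positivity to all later times.

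The paper's proof is much terser. It fixes $\Omega_\varepsilon=\{c(\omega)<A\}$ and a deterministic $t_1$ satisfying the threshold inequalities, then simply asserts that on $\Omega_\varepsilon$ one has $X^{H,a_0}_u>0$ for all $u\ge t_1$, appealing to ``Theorem~\ref{th:liminf} and its proof''. Your version makes this step genuinely explicit: you manufacture a time $\theta\in[T_2,T_1]$ at which $X^{H,a}_\theta>(M+1)\theta^\alpha$ via a contradiction argument exploiting the large drift, and you also control the random onset times $t_\delta(\omega)$, $t_{\beta,\delta}(\omega)$ of the fBm bounds from Corollary~\ref{boundsup} --- both points the paper passes over. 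Conversely, the paper uses the monotonicity $X^{H,a}\ge X^{H,a_0}$ for $a\ge a_0$ (established at the start of Section~\ref{sec:funofdrift}) to decouple the choice of $a_0$ guaranteeing long-time positivity from the further enlargement of $a$ guaranteeing $\prob\{\tau_0^{H,a}\le t_1\}<\varepsilon/2$; you instead take $a$ large enough to meet all three constraints simultaneously, which is equally valid and arguably cleaner.
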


\begin{proof}
Fix any $\varepsilon>0$ and consider $A>0$ such that
$\prob\set{c(\omega)>A} < \frac{\varepsilon}{2}$.
Consider
$\Omega_\varepsilon = \set{\omega : c(\omega)<A}$.
Then
$\prob\set{\Omega_\varepsilon} \ge 1-\frac{\varepsilon}{2}$.
Furthermore, in Theorem \ref{th:liminf} and its proof we can put
$\alpha = \frac{H}{2} + \frac14 \in (H,\frac12)$,
$a_0$ and $t_1$ such that
$t_1^{\alpha - H} = t_1^{\frac14 - \frac{H}{2}}
> 2^\delta + 1$,
$a_0^{\frac12} t_1^{\frac12 - H} >   A$.
Then on $\Omega_\varepsilon$
$X^{ H,a_0 }_u > 0$ for any $u\ge t_1$.
Obviously,
$X^{ H,a }_u > 0$ for any $u\ge t_1$ and $a\ge a_0$.
Now choose $a \ge a_0$ such that
$\prob\{\tau_0^{H,a} \le t_1\} < \frac{\varepsilon}{2}$.
Then
\[
\prob\set{\set{\tau_0^{H,a} > t_1} \cap \Omega_\varepsilon}
= 1 - \prob\set{\tau_0^{H,a}  \le t_1} -\prob\set{\overline{\Omega}_\varepsilon}
>1-\varepsilon,
\]
however, on the event
$\{\tau_0^{H,a} > t_1\} \cap \Omega_\varepsilon$
we have that $X^{H,a}_t > 0$ for any $t\ge0$.
\end{proof}

Now we consider the asymptotic behaviour of $X^{H,a}$ as $a\rightarrow 0.$
Let $a_1>a_2$. Then the solutions of the prelimit equations \eqref{eq:sde-eps}, considered with parameters $a_1$ and $a_2$, respectively, satisfy the conditions of comparison theorem \cite[Lemma 1]{MSTA1}, therefore,
$Y_t^{H,a_1,\varepsilon} \ge Y_t^{H,a_2,\varepsilon}$
a.s.,
where
$Y_t^{H,a_i,\varepsilon} = X_0 + a_i\int_0^t \frac{1}{Y_s^{H,a_i,\varepsilon}\ind_{Y_s^{H,a_s,\varepsilon}>0}+\varepsilon}\,ds +  \sigma B^H_t$,
$i=1,2$.
Taking limit in $\varepsilon\downarrow0$, we get that
$X_t^{H,a_1} \ge X_t^{H,a_2}$.
Therefore, for any $t\ge0$, $X_t^{H,a}$ decreases when $a$ decreases, and there exists a limit of $X_t^{H,a}$ as $a\downarrow0$:
$X_t^{H,0} \coloneqq \lim_{a\downarrow0} X_t^{H,a}$
a.s.
Obviously, $a\int_0^t\frac{ds}{X_s^{H,a}} + L_t^{H,a}$ also decreases in $a$, and we can introduce
$\widetilde L_t^H \coloneqq \lim_{a\downarrow0}(a\int_0^t\frac{ds}{X_s^{H,a}} + L_t^{H,a})$.
Then the process $X_t^{H,0}$ satisfies the equation
$X_t^{H,0} = X_0 + \sigma B^H_t + \widetilde L_t^H$,
$t\ge0$.
Obviously, process $\widetilde L_t^H$ increases only in the points where $X^{H,0}=0$, is nondecreasing and $X^{H,0}\ge0$.
In this sense, $\widetilde L_t^H$ is a version of reflection function for fractional Gaussian process $X_0+\sigma B^H$.

\section{Numerical illustrations}
\label{sec:numerics}

In this section we present several graphs illustrating our theoretical results.
We choose  $X_0 = 1$ and $H = 0.25$ for all our simulations and construct sample paths of a fractional diffusion Bessel process and related processes on the time interval $[0,1]$. In order to generate a solution of a stochastic differential equation, we consider a uniform partition of this interval with the step $10^{-6}$ and apply the Euler method.

Let us start with the case $a=1$.
Figure \ref{f:a1-lhs} contains a trajectory of $X^H$, constructed as a solution of \eqref{eq:sde-eps} with $\varepsilon = 10^{-4}$. Note that $X_H$ hits zero several times, but then, starting with some time, it never returns to zero.
This confirms the asymptotic property stated in Theorem \ref{th:liminf}.
Since the process $X^H$ can be constructed only at discrete points, it is problematic to illustrate continuity properties. However, the concentration of points around zeros may point at possible discontinuities.

\begin{figure}
    \centering
    \begin{minipage}{0.47\linewidth}
        \centering
        \includegraphics[width=\linewidth]{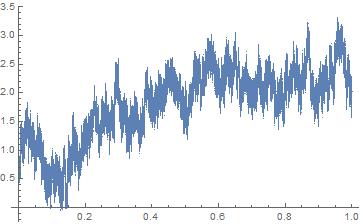}
\caption{Case $a=1$. A trajectory of $X^H$.}
\label{f:a1-lhs}
    \end{minipage}
    \begin{minipage}{0.47\linewidth}
        \centering
        \includegraphics[width=\linewidth]{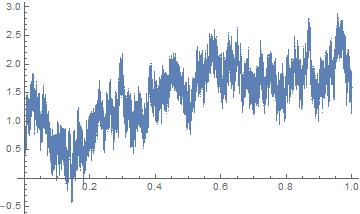}
\caption{Case $a=1$. The right-hand side of \eqref{eq:sde-ineq}, i.e., a trajectory of $X^H-L^H$}
\label{f:a1-rhs}
    \end{minipage}
\end{figure}

A trajectory of the right-hand side of \eqref{eq:sde-ineq} is displayed in Figure \ref{f:a1-rhs}.
We see that, unlike $X^H$, this process clearly has some points below zero. Moreover, the difference between left-hand and right-hand sides of \eqref{eq:sde-ineq}, i.e., the process $L^H$, is indeed positive as can be seen from its graph (Figure~\ref{f:a1-dif}).
This graph confirms also that $L^H$ is non-decreasing and increases only at the points $t$, where $X^H_t=0$. Moreover, $L^H$ is not continuous.

\begin{figure}
    \centering
    \begin{minipage}{0.47\linewidth}
        \centering
        \includegraphics[width=\linewidth]{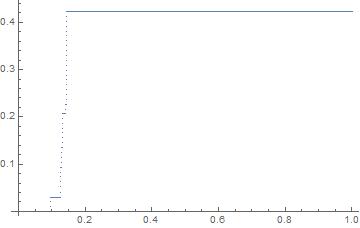}
\caption{Case $a=1$. A trajectory of $L^H$}
\label{f:a1-dif}
    \end{minipage}\hfill
    \begin{minipage}{0.47\linewidth}
        \centering
        \includegraphics[width=\linewidth]{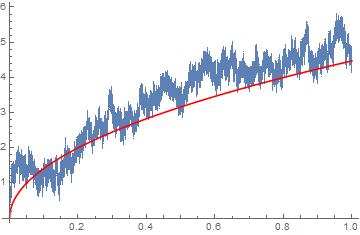}
\caption{Case $a=10$.  A trajectory of $X^H$ and a graph of $\sqrt{2at}$}
\label{f:a10-lhs}
    \end{minipage}
\end{figure}

In order to illustrate the behavior of the fractional diffusion Bessel process for large values of $a$, let us take $a=10$. A corresponding trajectory of $X^H$, constructed as a solution of \eqref{eq:sde-eps} with $\varepsilon = 10^{-4}$, is represented in Figure~\ref{f:a10-lhs}. We see that in this case the process $X^H$ never hits zero; this, in some sense,  agrees with Theorem~\ref{th:large-a}. Moreover, its asymptotic growth is comparable with a function $\sqrt{2a t}$ (red curve) as stated in Theorem \ref{th:limsup}.
\begin{figure}[b]
\centering
    \begin{minipage}{0.47\linewidth}
        \centering
        \includegraphics[width=\linewidth]{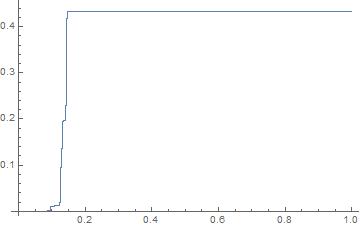}
\caption{Case $a=0.01$. A trajectory of $\widetilde{L}^H$.}
\label{f:a001-dif}
    \end{minipage}
\end{figure}

Finally, in order to illustrate the behaviour of $\widetilde L^H$  let us take a small value of $a$, namely $a=0.01$.
Figure \ref{f:a001-dif} contains a corresponding trajectory of $\widetilde L^H$. We see that it is still non-negative and non-decreasing, which confirms our theoretical results. Moreover, we see that $\widetilde L^H$ has more points of growth, in comparison  to $L^H$ which testifies in favor of the fact that the reflecting function $\widetilde L^H$ is the limit of two components:  the prelimit integral and the prelimit reflecting function.

\section*{Acknowledgements}
The first author  was supported by The Swedish Foundation for Strategic Research, grant Nr.\ UKR22-0017 and by Japan Science and Technology Agency CREST JPMJCR2115.
The second author is grateful to his hosts at Macquarie University, where he was a Visiting Fellow
sponsored by the Sydney Mathematical Research Institute under Ukrainian Visitors Program (UVP22).
The authors acknowledge that the present research is carried through within the frame and support of the ToppForsk project nr. 274410 of the Research Council of Norway with title STORM: Stochastics for Time-Space Risk Models.

\bibliographystyle{abbrv}
\bibliography{fbessel}
\end{document}